\newtheorem{theorem}{\bf Theorem}[section]
\newtheorem{lemma}[theorem]{\bf Lemma}
\newtheorem{obs}[theorem]{\bf Observation}
\theoremstyle{definition}
\newtheorem{definition}[theorem]{\bf Definition}
\def\red{\textcolor{red}}
\newcommand{\Z}{\mathbb{Z}}
\newcommand{\Q}{\mathbb{Q}}
\newcommand{\C}{\mathbb{C}}
\newcommand{\R}{\mathbb{R}}
\title{A lonely weak tile}
\author{Gergely Kiss, Itay Londner, M\'at\'e Matolcsi, G\'abor Somlai}
\begin{document}
\keywords{weak tiling, tiling, spectral set}
\subjclass[2020]{05B45, 42A75, 20K01}

\begin{abstract}
The notion of weak tiling was a key ingredient in the proof of Fuglede's spectral set conjecture for convex bodies, due to the fact that every spectral set tiles its complement weakly with a suitable Borel measure. In this paper we review the concept of weak tiling, and answer a question raised by Kolountzakis, Lev and Matolcsi, by giving an example of a set $T$ which tiles its complement weakly, but $T$ is neither spectral, nor a proper tile. 
    
\end{abstract}
\maketitle

\begin{center}
{\it In memory of Bent Fuglede} 
\end{center}

\section{Introduction}

In 1974, while studying commutation properties of partial differential operators $-i \partial/\partial x_j$, Bent Fuglede introduced the notion of spectral sets, and formulated his "spectral set conjecture" \cite{fug}. In the decades to follow, extensive research on this conjecture has unveiled a wealth of connections between harmonic analysis, geometry, combinatorics and number theory. 

\medskip

In this paper we will review some of the recent developments in connection with Fuglede's conjecture, and answer a question raised recently in \cite{weak}. In particular, we will focus on the concept of {\it weak tiling}, and present a set $T$ which tiles its complement weakly, but $T$ is neither spectral, nor a proper tile. In this sense we can call $T$ a "lonely weak tile", because neither does it have a spectrum, nor does it have a tiling complement. 

\medskip

The paper is organized as follows. In Section \ref{sec2} we recall the necessary notions, introduce notations and formulate the problem. In Section \ref{sec3} we present our construction of a lonely weak tile.

\section{Spectral sets, tiling and weak tiling}\label{sec2}

Let $E$ denote a locally compact Abelian (LCA) group, with Haar measure $\nu$. The character group of $E$ (the Pontryagin dual) will be denoted by $\hat{E}$. Recall that a measurable  set $X\subset E$ of finite positive measure is called a {\it tile} if some essentially disjoint translated copies of $X$ cover $E$ unilaterally, i.e. there exists a translation set $\Lambda\subset E$ such that $\cup_{\lambda\in\Lambda} X+\lambda=E$, up to measure 0, and the union is disjoint up to measure zero. In notation we write $X+\Lambda =E$, or in an analytic manner, $1_X\ast 1_\Lambda=1_E$ (almost everywhere), where $1_X$ is the indicator function of $X$, and $1_\Lambda$ is the sum of Dirac masses put on each $\lambda\in \Lambda$.  

\medskip

A measurable set $X\subset E$ of finite positive measure is called {\it spectral} if there exists a set of characters $S\subset \hat{E}$ such that the restrictions of the elements of $S$ to $X$ form an orthogonal basis in $L^2(X)$. Such a set $S$ is called a {\it spectrum} of $X$. 

\medskip

Fuglede's spectral set conjecture stated that a set $X\subset E$ is spectral if and only if it is a tile. In his original paper \cite{fug} Fuglede stated the conjecture in the case $E=\R^d$, but he remarked that the notions make sense in any LCA group, and experimented with some particular cases in finite cyclic groups.  

\medskip

The conjecture was initially proved in several particular cases \cite{laba1, ikp, ikt1, ikt2}, but was eventually disproved in $\R^d$ for any $d\ge 3$ by a series of counterexamples  \cite{tao, fug4, had, tnos, fr, fmm}.  However, in a recent breakthrough \cite{conv}, the conjecture was proved for convex bodies in all dimensions $d$. 

\medskip

One of the key concepts in the proof of \cite{conv} was {\it weak tiling}. Recall, that a bounded measurable set $X\subset E$ weakly tiles another (possibly unbounded)  set $D\subset E$ if there exists a locally finite Borel measure $\mu$ such that $1_X \ast \mu =1_D$. The main point in the argument of \cite{conv} is that if $X$ is spectral then $X$ tiles its own complement $X^c$ weakly. 

\medskip

It is also clear that if $X$ is a tile, then $X$ also tiles $X^c$ weakly. Therefore, the property of a set tiling its complement weakly is shared by both spectral sets and tiles. 

\medskip

Interestingly, it is not at all  obvious to come up with an example of a set $X$ which tiles its complement weakly, but does not tile the group $E$ properly. In fact, the proof of Fuglede's conjecture for convex bodies in \cite{conv} relies on a geometric argument that shows that such a situation cannot happen if $X$ is a convex body. 

\medskip

The natural question arises: are there any such examples? The  answer is affirmative, because one can consider any set $X$ which is spectral, but not a tile (such sets exist, and are constructed explicitly \cite{tao, had}, in certain finite groups, and in $\R^d$ in any dimension $d\ge 3$).

\medskip

The following natural question was raised in \cite{weak}: is there an example of a set $X\subset \R^d$ such that $X$ tiles $X^c$ weakly but $X$ is neither spectral nor a tile?  In Section \ref{sec3} we will answer this question in the affirmative by constructing such a set. The construction will be carried out in a finite group $E$, but we will then describe how the example can be lifted to $\R^5$ by the standard techniques described in \cite{tao, tnos}. The arising set will be a finite union of unit cubes, and hence can be considered as open or closed, as desired. Also, the techniques of \cite{gk} can be applied to make the example connected (in a higher dimensional space $\R^d$).

\section{A lonely weak tile}\label{sec3}

As usual, for any $n$, the finite cyclic group of order $n$ will be denoted by $\Z_n$. Also, for a finite Abelian group $E$, and any function $f: E\to \C$ we will use the Fourier transform $\hat{f}$ of $f$ defined by $\hat{f}(\gamma)=\sum_{x\in E} f(x)\gamma(x)$, for any $\gamma\in \hat{E}$. The Dirac mass at a point $x\in E$ will be denoted by $\delta_x$.

\medskip

As we shall see, the set we construct below will exhibit a slightly stronger property than weakly tiling its own complement. We recall the definition of weak pd-tiling from \cite{kms}.

\begin{definition}
For a finite Abelian group $E$, we say that a set $X\subset E$ pd-tiles $E$ weakly if there exists a nonnegative function $h: E\to \R$ such that $h(0)=1$, $1_X\ast h=1_E$, and its Fourier transform $\hat{h}\ge 0$. To abbreviate the terminology, we will sometimes just say that $X$ pd-tiles $E$, dropping the term "weakly".    
\end{definition}

The letters "pd" stand for "positive definite". Clearly, if $X$ pd-tiles $E$ then $X$ tiles $X^c$ weakly (this is the significance of $h(0)=1$). 

\medskip

Recall from \cite{kms} that if $X$ is a tile, or is a spectral set, then $X$ pd-tiles $E$. In what follows, we will construct a set $X$ in an appropriate finite group $E$ such that $X$ pd-tiles $E$ but $X$ is neither spectral, nor a tile. 

\medskip

We begin our construction by recalling some results from the literature.

\medskip

For every odd prime $p$, there exists a spectral set $B$ in the finite group $H=\Z_p^4$, which is of cardinality $2p$, and therefore does not tile the group \cite{nontilep}. We can fix any $p>3$. 

\medskip

Next, let $G_1=\Z_6^5$, $G_2=\Z_q$, and consider the group $G=G_1\times G_2$. As explained in \cite{tnos}, there exists a non-spectral tile $A\subset G$ for any $q\ge 15$. We will choose $q$ to be a large prime, $q>6 p^4$. We recall the construction of $A$ because we will need its properties. We will identify elements of $G$ with vectors of length 5+1, where the first 5 coordinates range from 0 to 5, and the last coordinate ranges from 0 to $q-1$. 

Let $\pi\in S_5$ be any permutation, and  $v=(1,2,3,4,5)\in \Z_6^5$. Let $\phi_\pi : \Z_6^5\to \Z_6$ be the homomorphism defined by $\phi_\pi (x)=\langle \pi(v), x \rangle$ mod 6 (where $\langle \cdot, \cdot \rangle$ is simply the dot product of the vectors). Let $A_\pi =Ker \ \phi_\pi$. It is important to note for later purposes that all $A_{\pi}$ are subgroups. We enumerate the 120 elements of $S_5$ as $ \pi_k, 0\leq k\leq 119$. The set $A$ is constructed layer by layer: for $0\le k\le 119$ consider the set $A_k=A_{\pi_k}\times\{ k\}\subset \Z_6^5 \times \Z_q$, where $k$ indicates the $\Z_q$ coordinate. For $120\le k$ we can take $A_k=A_{\pi_0}\times \{k\}$ (on the top layers we just keep repeating $A_{\pi_0}$). Finally, define $A=\cup_{k=0}^{q-1}A_k$. As explained in \cite{tnos} the set $A$ tiles $G$, but it is not spectral. 

\medskip

It is now natural to consider the direct product group $E=G\times H$, and  $A\times B\subset E$, in the hope that it pd-tiles $E$ but it is neither spectral, nor a tile. We do not know whether this is indeed the case, because we are not able to decide whether $A\times B$ is spectral or not. For this reason, we need to modify the set $A\times B$ slightly. 

\medskip

We need to introduce some terminology concerning the product group $G\times H$. If $u:G\to \C$ and $v: H\to \C$ are arbitrary functions, we define $u\otimes v: G\times H\to \C$ by the formula $u
\otimes v \ (g,h)=u(g)v(h)$. Notice that $u\otimes v$ can be written as a convolution, $u\otimes v= (u\otimes \delta_{0_H}) \ast (\delta_{0_G}\otimes v$). Note also, that $\widehat{u\otimes v}=\hat{u}\otimes  \hat{v}$. In particular, if both $u$ and $v$ are positive definite, then so is $u\otimes v$. 

\medskip

We now introduce some flexibility to the set $A\times B$ by shifting the copies of $B$ in the $H$-component. Formally, for every map $t:A\rightarrow H$ let 
\begin{equation}\label{pt}
P_t= \bigcup_{a\in A} \{a\}\times\{t(a)+B\}    
\end{equation}

When $t$ is the constant 0 mapping, we get back $P_0=A\times B$. 

\begin{lemma}
For every map $t: A \to H$, the set $P_t$ defined in equation \eqref{pt} is not a tile in $G\times H$. 
\end{lemma}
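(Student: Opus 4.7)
The plan is to argue by contradiction and exploit the same cardinality obstruction that prevents $B$ itself from tiling $H$. I would assume $P_t \oplus \Lambda = G\times H$ for some tiling complement $\Lambda \subset G\times H$, and deduce that this assumption would force $H$ to be partitioned into translates of $B$, which is impossible since $|B| = 2p$ does not divide $|H| = p^4$ for odd $p$.

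To implement this, I would fix an arbitrary $g_0 \in G$ and examine which triples $(a, b, \lambda) \in A \times B \times \Lambda$ contribute to the fiber $\{g_0\}\times H$. Writing $\lambda = (\lambda_1, \lambda_2)$, such a triple contributes to the fiber precisely when $\lambda_1 = g_0 - a$, and its $H$-coordinate is then $t(a) + b + \lambda_2$. Setting $\Lambda|_g := \{y \in H : (g,y) \in \Lambda\}$, the fiber $\{g_0\}\times H$ is thus covered by the translates $t(a) + B + \lambda_2$ as $a$ ranges over $A$ and $\lambda_2$ over $\Lambda|_{g_0 - a}$. The tiling property guarantees that each point $(g_0, h)$ is represented by a unique triple; in particular, these translates of $B$ must be pairwise disjoint and their union must be all of $H$.

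From here, $H$ is a disjoint union of translates of $B$, so $|B|$ divides $|H|$, contradicting $|B| = 2p$ and $|H| = p^4$. The one point that needs care is the disjointness of the translates $t(a) + B + \lambda_2$ coming from distinct pairs $(a, \lambda_2)$, since the argument breaks down if two such translates coincide or overlap; but any overlap would produce two distinct triples representing the same point of $G\times H$, violating the unique-decomposition property of a proper tiling. Once that observation is in place, the cardinality step $2p \nmid p^4$ is immediate, and in fact mirrors exactly the reasoning cited earlier in the paper for why $B$ itself fails to tile $H$.
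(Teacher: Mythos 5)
Your proof is correct and follows essentially the same route as the paper: both arguments restrict the assumed tiling to a single fiber $\{g_0\}\times H$, observe that the pieces meeting that fiber are translated copies of $B$ which must therefore tile $H$, and derive a contradiction from the fact that $B$ cannot tile $H$ (the paper cites this directly, while you make the underlying reason explicit via $2p \nmid p^4$). Your bookkeeping with triples $(a,b,\lambda)$ is a slightly more detailed version of the paper's observation that each $K_m=(P_t+m)\cap(\{g_0\}\times H)$ is a single translate of $B$, but the substance is identical.
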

\begin{proof}
Assume $P_t$ is a tile, by contradiction. Then $\cup_{m\in M} (P_t+m)=G\times H$ for some translation set $M$. Consider those elements $m\in M$ for which $K_m:=(P_t+m)\cap (\{0\}\times H)\ne \emptyset$. Each such $K_m$ is a translated copy of $B$, i.e. $K_m=\{0\}\times (x_m+B)$. This means that 
$$\bigcup_{m: K_m\ne \emptyset} (x_m+B)=H$$ is a tiling. However, we know that $B$ cannot tile $H$, a contradiction. 
\end{proof}

We remark that the construction of $P_t$ and the statement of  Lemma 3.2 above are similar to those of  \cite{gk}[Lemma 3.1].

\begin{lemma}
For every map $t: A \to H$, the set $P_t$ defined in equation \eqref{pt} pd-tiles $G\times H$. 
\end{lemma}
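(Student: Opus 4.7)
The plan is to build a pd-tiling function for $P_t$ as a tensor product on $G\times H$. First, I would invoke the result from \cite{kms} recalled above the statement: since $A$ is a tile of $G$, there exists a nonnegative, positive definite function $h_1: G\to \R$ with $h_1(0_G)=1$ and $1_A\ast h_1=1_G$; and since $B$ is spectral in $H$, there is an analogous function $h_2: H\to \R$ with $h_2(0_H)=1$ and $1_B\ast h_2=1_H$ and $\hat{h_2}\ge 0$.

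Next, I would set $h=h_1\otimes h_2$ on $G\times H$. The required properties $h\ge 0$, $h(0_G,0_H)=h_1(0_G)h_2(0_H)=1$, and $\hat{h}=\hat{h_1}\otimes \hat{h_2}\ge 0$ follow immediately from the observation recorded in the excerpt that tensor products preserve positive definiteness and nonnegativity.

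The only nontrivial point is the convolution identity $1_{P_t}\ast h=1_{G\times H}$. For this I would write $1_{P_t}(g',x')=\sum_{a\in A}\delta_a(g')\cdot 1_{t(a)+B}(x')$ and compute
\[
(1_{P_t}\ast h)(g,x)=\sum_{a\in A} h_1(g-a)\cdot \bigl(1_{t(a)+B}\ast h_2\bigr)(x).
\]
The crucial point is that $1_{t(a)+B}\ast h_2=1_H$ for every shift $t(a)\in H$, because convolution commutes with translation and $1_B\ast h_2=1_H$. Hence the inner factor is identically $1$, independent of $t(a)$, and the expression collapses to $(1_A\ast h_1)(g)=1_G(g)=1$.

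Since both ingredients $h_1$ and $h_2$ are furnished by the cited result of \cite{kms}, and the tensor product is tailored precisely so that the shifts $t(a)$ cancel out, I do not expect a serious obstacle. The only conceptual point worth emphasizing is that the choice of $t$ is irrelevant here: because the pd-tiling of $H$ by $B$ is translation invariant, one may freely reshuffle the copies of $B$ attached above each $a\in A$ without disturbing the pd-tiling of the product group.
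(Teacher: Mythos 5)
Your proposal is correct and follows essentially the same route as the paper: both take the tensor product $w_A\otimes w_B$ of the pd-tiling functions furnished by \cite{kms} and use the translation invariance $1_{t(a)+B}\ast w_B=1_H$ to collapse the convolution to $1_A\ast w_A=1_G$. The only cosmetic difference is that the paper performs the convolution in two stages (first with $\delta_{0_G}\otimes w_B$, then with $w_A\otimes\delta_{0_H}$), while you convolve with the factored kernel in one step.
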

\begin{proof}
Let $1_A\ast w_A=1_G$ and $1_B\ast w_b=1_H$ be pd-tilings of $G$ and $H$ by $A$ and $B$, respectively. These exist because $A$ is a tile and $B$ is spectral (see \cite{kms}). Then, for every $h \in H$, we also have $1_{h+B}\ast w_B=1_H$, because $1_{h+B}=\delta_h \ast 1_B$. Therefore, $P_t\ast  (\delta_{0_G}\otimes w_B)$ is constant 1 on the cosets $a+H$ for all $a\in A$, and constant zero on $g+H$ for all $g \in G\setminus A$. Hence,  
\begin{equation}
P_t\ast \left( \delta_{0_G}\otimes w_B \right) \ast (w_A\otimes \delta_{0_H})=P_t \ast (w_A\otimes w_B)=1_{G\times H}
\end{equation}
 is a pd-tiling of $G\times H$ by $P_t$. 
\end{proof}

We now specify the mapping $t: A\to H$. Fix a basis $\{v_1, v_2, v_3, v_4\}$ of the vector space $\Z_p^4$. Let $t$ be defined as 

\begin{equation}\label{t1}
t(0_{G_1},k)=v_k \ \ \text{for} \ \ 1\le k \le 4, \ \text{and} \ \ t(a, j)=0_H \ \ \text{otherwise.}
\end{equation}

(Note here that $a_k:=(0_{G_1},k)\in A_k$ for every $0\le k\le q-1$ because $A_k=A_{\pi_k}\times \{k\}$ and $A_{\pi_k}$ is a subgroup.) In plain terms, we just shift four copies of $B$ in comparison to the direct product $A\times B$, and the shifting vectors form a basis of $\Z_p^4$. 

\medskip

Consider the set $P_t$ corresponding to the mapping above. We will need to analyze the zero-set of the Fourier transform $\hat{1}_{P_t}$. Note that, by definition, 
\begin{equation}\label{ft}
\hat{1}_{P_t} (\gamma, \rho)=
\hat{1}_B(\rho)\sum_{a\in A} \gamma(a)\rho(t_a).
\end{equation}

For the sake of concrete calculations, it will be convenient to identify any element $z$ of $E=G\times H=G_1\times G_2 \times H$ with a vector $z=(u_1, u_2 , v)$ where $u_1, u_2, v$ are vectors of length 5, 1 and 4, respectively, and the coordinates of $u_1$ range from 0 to 5, the coordinate of $u_2$ ranges from 0 to $q-1$, and the coordinates of $v$ range from 0 to $p-1$. The components of $u_1, u_2, v$ will be referred to as the $G_1$-component, $G_2$-component and $H$-component of $z$. Sometimes we group $u_1$ and $u_2$ as $(u_1, u_2)$, and refer to it as the $G$-component of $z$. 
An element $\alpha$ of $\hat{E}$ will also be identified with such a vector $\alpha=(\gamma_1, \gamma_2, \rho)$, and the action of $\alpha$ on $z$ will be given by $\alpha(z)=e^{2i\pi (\langle \gamma_1, u_1 \rangle/6 +  \gamma_2 u_2 /q + \langle \rho, v \rangle /p)}=\omega_6^{\langle \gamma_1, u_1 \rangle} \omega_q^{\gamma_2 u_2}\omega_p^{\langle \rho, v \rangle}$, where $\omega_n=e^{2i\pi/n}$ is an $n$th root of unity. 

\medskip

Let us now investigate the expression \eqref{ft}. Note first, that the sum can be written as $\sum_{a\in A} \gamma(a)\rho(t_a)=\hat{1}_A(\gamma)-\sum_{i=1}^4 \gamma(a_i)(1-\rho(v_i))$, and therefore 

\begin{equation}
\hat{1}_{P_t} (\gamma, \rho)=
\hat{1}_B(\rho) \left (\hat{1}_A(\gamma)-\sum_{i=1}^4 \gamma(a_i)(1-\rho(v_i))\right ).   
\end{equation}

We can identify some trivial cases where $\hat{1}_{P_t} (\gamma, \rho)=0$. Namely, this is the case when $\hat{1}_B(\rho)=0$. When $\hat{1}_B(\rho)\ne 0$ and $\rho=0$, the expression for $\hat{1}_{P_t} (\gamma, \rho)$ simplifies to $\hat{1}_B(\rho) \hat{1}_A(\gamma)$, and it is zero if and only if $\hat{1}_A(\gamma)=0$. The remaining case, when $\hat{1}_B(\rho)\ne 0$ and $\rho\ne 0$, is non-trivial, but we have the following important partial information on it. 

\begin{lemma}\label{qn0}
Let $P_t$ be defined by equations  \eqref{pt} and \eqref{t1}. Assume that $\hat{1}_B(\rho)\ne 0$, and $\rho\ne 0$, and the $\Z_q$-component of $\gamma=(\gamma_1, \gamma_2)$ satisfies $\gamma_2\ne 0$. Then $\hat{1}_{P_t} (\gamma, \rho)\ne 0$
\end{lemma}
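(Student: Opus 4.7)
\emph{Proof plan.} My approach is to work in the cyclotomic field $K = \Q(\omega_p, \omega_q)$ and to exploit the linear independence of powers of $\omega_q$ over the subfield $\Q(\omega_p)$. Setting $\zeta := \omega_q^{\gamma_2}$, the assumption $\gamma_2 \ne 0$ together with the primality of $q$ guarantees that $\zeta$ is a primitive $q$-th root of unity. Since $\gcd(p,q) = 1$, standard cyclotomic theory gives that $\{1, \zeta, \ldots, \zeta^{q-2}\}$ is a basis of $K$ over $\Q(\omega_p)$. This will let me compare two $K$-valued expressions by matching their coefficients in $\Q(\omega_p)$.

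First I would expand $\hat{1}_A(\gamma)$ using the layer structure of $A$, writing
\[
\hat{1}_A(\gamma) \;=\; \sum_{k=0}^{q-1} c_k\, \zeta^k, \qquad c_k := \hat{1}_{A_{\pi_k}}(\gamma_1).
\]
Because each $A_{\pi_k}$ is a subgroup of $\Z_6^5$ of index $6$, each $c_k$ is either $0$ or $M := 6^4 = 1296$, depending on whether $\gamma_1$ annihilates $A_{\pi_k}$. Moreover, since $A_{\pi_k}=A_{\pi_0}$ for $k\ge 120$, the coefficients stabilize: $c_k = c_0$ for $120 \le k \le q-1$. Since $a_i=(0_{G_1},i)$ has zero $G_1$-component, $\gamma(a_i)=\zeta^i$, so after dividing by the nonzero factor $\hat 1_B(\rho)$ the relation $\hat{1}_{P_t}(\gamma,\rho)=0$ would be equivalent to
\[
\sum_{k=0}^{q-1} c_k\,\zeta^k \;=\; \sum_{i=1}^4 (1-\rho(v_i))\,\zeta^i.
\]

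I would then use the identity $\zeta^{q-1} = -\sum_{j=0}^{q-2}\zeta^j$ to rewrite the left-hand side as $\sum_{k=0}^{q-2}(c_k-c_0)\zeta^k$ (using $c_{q-1}=c_0$); the $k=0$ contribution vanishes, and the hypothesis $q>6p^4>120$ guarantees $c_k=c_0$ for $120\le k\le q-2$. Equating $\Q(\omega_p)$-coefficients of $\zeta^k$ in the resulting identity yields $c_k-c_0 = 1-\rho(v_k)$ for $k\in\{1,2,3,4\}$ and $c_k=c_0$ for $5\le k \le 119$.

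The final step, which I view as the main obstacle and which explains why $M$ must dominate the $p$-adic side, is a simple magnitude comparison: $c_k-c_0$ lies in $\{-M,0,M\}$, whereas $|1-\rho(v_k)|\le 2 < M$. Hence the equation $c_k-c_0 = 1-\rho(v_k)$ forces both sides to vanish, giving $\rho(v_k)=1$ for every $k\in\{1,2,3,4\}$. But $\{v_1,v_2,v_3,v_4\}$ was chosen to be a basis of $\Z_p^4$, so this would make $\rho$ trivial on $H$, contradicting $\rho\ne 0$. Therefore no such $(\gamma,\rho)$ can satisfy $\hat 1_{P_t}(\gamma,\rho)=0$, completing the proof.
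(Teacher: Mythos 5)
Your proof is correct and follows essentially the same route as the paper: both expand $\hat{1}_A(\gamma)$ layer by layer into a sum $\sum_k c_k\omega_q^{k\gamma_2}$ with $c_k\in\{0,6^4\}$, invoke the Galois-theoretic fact that (by coprimality of $q$ with $6p$) the minimal polynomial of $\omega_q$ over the relevant cyclotomic subfield is $\Phi_q$, and conclude via the magnitude comparison $|1-\rho(v_k)|\le 2 < 6^4$ together with the basis property of $v_1,\dots,v_4$. The only difference is cosmetic: you normalize to the basis $\{1,\zeta,\dots,\zeta^{q-2}\}$ and match coefficients uniformly, which lets you avoid the paper's three-case split on the values of the inner sums.
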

\begin{proof}
	We will need the following fact from elementary Galois theory: if $\sum_{j=0}^{q-1} \beta_j\omega_q^j=0$ where $\beta_j\in \Q(\omega_{6p})$, then all $\beta_j$ are necessarily equal. (Here we use the important property of our construction that $6p$ and $q$ are relatively prime to each other, and hence the minimal polynomial of $\omega_q$  over $\Q(\omega_{6p})$ is $\Phi_q(x)=\sum_{j=0}^{q-1}x^j$; see \cite{kang})  
		
		\medskip
		
		We need to prove that 
		$\hat{1}_A(\gamma)-\sum_{k=1}^4 \gamma(a_k)(1-\rho(v_k))\ne 0$. Since $a_k=(0_{G_1}, k)$, we have $\gamma(a_k)=\omega_q^{k \gamma_2}$.
		
		\medskip

		Note first, that 
		\begin{align}
			\hat{1}_A(\gamma) & =\hat{1}_A(\gamma_1, \gamma_2)=\\
			&\sum_{(u_1,u_2)\in A} \omega_6^{\langle \gamma_1, u_1 \rangle}\omega_q^{\gamma_2 u_2}=\\
			&	\label{eq7}	\sum_{u_2=0}^{q-1}  \omega_q^{\gamma_2 u_2} \left (\sum_{u_1\in A_{u_2}} \omega_6^{\langle \gamma_1, u_1\rangle} \right).
		\end{align}
		
		Due to the fact that $\gamma_2\ne 0$ the exponents $\gamma_2 u_2$ range through 0 to $q-1$ (mod $q$). Recall that each slice  $A_i, 0\le i\le q-1$ is subgroup of size $6^4$. Since the inner sum in \eqref{eq7} is the evaluation of the Fourier transform of its indicator function, which is an indicator function of a subgroup, it may only attain two values $0$ and $6^4$.
		
		
		\medskip
		
		There are three essentially different cases to consider. 
		
		\medskip
		
		The first is when all inner sums are zero. This happens if and only if $\gamma_1\ne (0,0,0,0,0)$ and $\gamma_1\ne c\cdot \pi(1,2,3,4,5)$ for any $c\ne 0$ and $\pi\in S_5$. 
		
		\medskip
		
		The second is when all inner sums are $6^4$. This happens if and only if $\gamma_1=(0,0,0,0,0)$. 
		
		\medskip
		
		The third is when some inner sums are 0, while others are $6^4$. This happens for all the remaining vectors $\gamma_1$ not covered by the first two cases, i.e when $\gamma_1= c\cdot \pi(1,2,3,4,5)$ for some $c\ne 0$ and $\pi\in S_5$ (see \cite{tnos}).
		
		\medskip
		
		In the first two cases, $\hat{1}_A(\gamma)=0$, and 
		$\hat{1}_A(\gamma)-\sum_{k=1}^4 \gamma(a_k)(1-\rho(v_k))= -\sum_{k=1}^4 \omega_q^{k \gamma_2}(1-\rho(v_k))$. We can view it as a sum $\sum_{k=0}^{q-1} \beta_k w_q^{k\gamma_2}$ where $\beta_k=0$ for all $k\ne 1, 2, 3, 4$. Such an expression could only be zero if all $\beta_k=0$. However, this is not the  case because $v_1, v_2, v_3, v_4$ form a basis of $\Z_p^4$ and  $\rho\ne (0,0,0,0)$,  hence there exists an index $1\le k\le 4$ such that $\langle v_k , \rho \rangle \ne 0$, and $\rho(v_k)=\omega_p^{\langle \rho , v_k \rangle}\ne 1$.

		
		\medskip
		
		In the third case, we can pick an exponent $j_1=\gamma_2 u_2^{(1)}$ such that the coefficient of $\omega_q^{j_1}$ in $\hat{1}_A$ is 0, and another exponent $j_2=\gamma_2 u_2^{(2)}$ where the coefficient of $\omega_q^{j_2}$ is $6^4$. It is then clear that the coefficients of $\omega_q^{j_1}$ and $\omega_q^{j_2}$ in the expression $\hat{1}_A(\gamma)-\sum_{k=1}^4 \gamma(a_k)(1-\rho(v_k))$ cannot be equal because $|1-\rho(v_k)|\le 2$. Therefore, $\hat{1}_A(\gamma)-\sum_{k=1}^4 \gamma(a_k)(1-\rho(v_k))\ne 0$ once again. 
	\end{proof}
	
We are now in a position to prove that $P_t$ is not spectral. 
	
	\begin{lemma}
		The set $P_t$ defined by equations \eqref{pt} and \eqref{t1} is not spectral in $G\times H$. 
	\end{lemma}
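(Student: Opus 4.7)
The plan is to assume for contradiction that $P_t$ admits a spectrum $S \subseteq \hat{E}$ (so $|S| = |A|\cdot|B|$ and every nonzero difference $s_1 - s_2$ lies in $Z(\hat{1}_{P_t})$), and to combine two structural constraints on $S$ into a counting contradiction that ultimately exploits the hypothesis $q > 6p^4$. The two constraints come from (i) fixing the $\rho$-coordinate, which forces an orthogonal-set condition for the non-spectral set $A$, and (ii) Lemma \ref{qn0}, which rigidly ties together the $\gamma_2$-coordinates of elements lying in different $\rho$-fibers.

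First I would partition $S = \bigsqcup_{\rho \in \Xi} S_\rho$ by the $\rho$-coordinate, where $\Xi \subseteq \hat{H}$ is the set of $\rho$-values that occur. For distinct $s_1, s_2 \in S_\rho$ the difference $s_1 - s_2$ has zero $\rho$-component and $\hat{1}_{P_t}(\gamma, 0) = |B|\,\hat{1}_A(\gamma)$, so the $\hat{G}$-components of $S_\rho$ form an orthogonal set of characters for $A$. Since $A$ is a tile which is not spectral \cite{tnos}, every such orthogonal set has size strictly less than $|A|$; hence $|S_\rho| \leq |A| - 1$ for every $\rho \in \Xi$.

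Next I would introduce the graph $\mathcal{G}$ on $\Xi$ with edges $\rho_1 \sim \rho_2$ iff $\hat{1}_B(\rho_1 - \rho_2) \neq 0$. For $s_1 \in S_{\rho_1}$ and $s_2 \in S_{\rho_2}$ with $\rho_1 \sim \rho_2$, Lemma \ref{qn0} applied to $s_1 - s_2$ forces $\gamma_2(s_1) = \gamma_2(s_2)$; propagating along paths, the $\gamma_2$-coordinate is constant on $\bigcup_{\rho \in \mathcal{C}} S_\rho$ for every connected component $\mathcal{C}$ of $\mathcal{G}$, so this union is contained in a set of cardinality at most $6^5 |\mathcal{C}|$ (since $|\hat{G}_1| = 6^5$). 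Moreover, picking one representative from each component of $\mathcal{G}$ yields a subset of $\hat{H}$ whose pairwise differences lie in $Z(\hat{1}_B)$, i.e., an orthogonal set for the spectral set $B$; consequently the number of components is at most $|B| = 2p$.

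The argument concludes with a case split on the number $k_2$ of non-singleton components. If $k_2 = 0$, then $\Xi$ itself is orthogonal for $B$, so $|\Xi| \leq |B|$ and $|S| \leq (|A|-1)|B| < |A|\cdot|B|$, an immediate contradiction. If $k_2 \geq 1$, then the number $k_1$ of singleton components is at most $|B| - 1$, and bounding the two types of contributions separately gives
\begin{equation*}
|A|\cdot|B| = |S| \leq (|A|-1)(|B|-1) + 6^5 p^4,
\end{equation*}
which after substituting $|A| = 6^4 q$ and $|B| = 2p$ rearranges to $q \leq 6p^4 - (2p-1)/6^4 < 6p^4$, contradicting $q > 6p^4$. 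The main obstacle I expect is packaging the two rather different constraints — one coming from the non-spectrality of $A$, the other from the Galois-theoretic Lemma \ref{qn0} — into a single counting inequality; once the framework is in place, the numerical bound is a short calibration that precisely explains why the construction required choosing $q > 6p^4$.
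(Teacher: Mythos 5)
Your proposal is correct and follows essentially the same route as the paper: the same partition of the putative spectrum $S$ into $\rho$-layers, the same graph on the occurring $\rho$'s with adjacency given by $\hat{1}_B(\rho_1-\rho_2)\ne 0$, the bound $|S_\rho|\le |A|-1$ from non-spectrality of $A$, the bound $6^5$ per layer in any non-trivial component via Lemma \ref{qn0}, and the same final count exploiting $q>6p^4$. The only (harmless) differences are cosmetic: you organize the case split by the number of non-singleton components rather than by $|V|$ and the set of isolated vertices, which lets you absorb the paper's third case into the component count, and you should restrict the claim that $\gamma_2$ is constant on $\bigcup_{\rho\in\mathcal{C}}S_\rho$ to components with at least one edge (for singletons it is vacuous, and you correctly bound those by $|A|-1$ instead).
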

	
	\begin{proof}
		
		Let us assume, by contradiction,  that $P_t$ is spectral, and let $S\subset \hat{G}\times \hat{H}$ be a spectrum of it. Then 
  \begin{equation}\label{spsize}
  |S|=(6^4q)(2p).
  \end{equation}

		\medskip
		
		Let $V$ denote the set of elements $\rho\in \hat{H}$  such that $(\hat{G}\times \{\rho\}) \cap S \ne \emptyset$.
		We define a graph $\Gamma$ whose vertex set is $V$. Two vertices $\rho_1 \ne \rho_2$ are connected by an edge if and only if $\hat{1}_B (\rho_1-\rho_2) \ne 0$. We denote the set of isolated points of $\Gamma$ by $Y$.
		
		\medskip
		
		Notice that the cardinality of $Y$ is at most $2p$. This is because the isolated characters are mutually orthogonal over $B$, and $|B|=2p$. 
		
		\medskip
		
		Notice also that if a vertex $\rho  \in V$ is not isolated, then $|(\hat{G}\times \{\rho\}) \cap S| \le 6^5$. This follows from Lemma \ref{qn0}. Indeed, assume $\rho_1$ is connected to $\rho$, and pick an element $s_1=((\gamma_1, \gamma_2), \rho_1)\in S$. Then for each  $s'=((\gamma_1', \gamma_2'), \rho)\in S$ on the layer $\rho$, we have $\hat{1}_B(\rho_1-\rho)\ne 0$, $\rho_1-\rho\ne 0$ and $\hat{1}_{P_t}(s-s')=0$, so we must have $\gamma_2=\gamma_2'$. There are at most $6^5$ elements with a fixed $\Z_q$-component. 
		
		\medskip
		
		Finally we perform a case by case analysis of the structure of the set $V$. 
		
		\medskip
		
		Let us assume first that $|V| \le 2p$. Then there exists $\rho \in V$ such that $S_\rho :=(\hat{G}\times \{\rho\})  \cap S$ is of cardinality at least $6^4 q$. However, for any $s=(\gamma, \rho)$, $s'=(\gamma', \rho)\in S_\rho$ the $\hat{H}$-component of $s-s'$ is 0, so we have $\hat{1}_{P_t}(s-s')=|B|\hat{1}_A(\gamma -\gamma')$ by \eqref{ft}, and it must be 0 due to $S$ being a spectrum of $P_t$. This contradicts the fact that $A$ is not spectral in $G$. 
		
		\medskip
		
		Let us now assume that $|Y|<2p$. Layers of isolated points contain at most $6^4q-1$  elements of $S$ (due to the fact that $A$ is not spectral in $G$, as explained above), while layers of non-isolated points contain at most $6^5$ elements of $S$. Therefore, 
		\begin{align*}
			|S| & \leq |Y|(6^4 q -1)+(p^4-|Y|) 6^5\\
			&\leq (2p-1)(6^4 q -1)+p^4 \cdot 6^5\\
			&<2p \cdot 6^4 \cdot q,
		\end{align*}
		since $q> 6p^4$, a contradiction to \eqref{spsize}.
	
		\medskip

		Let us finally assume that $|Y|=2p$, and $|V|>2p$. Then, for a vertex $v \in  V \setminus Y$, we have that $Y\cup \{v\}$ is an independent set (i.e. a set in which no two elements are connected by an edge) of size $2p+1$ in $V$. This again means that the corresponding characters are pairwise orthogonal on $B$, a contradiction. 
		
	\end{proof}

Finally, let us describe how a lonely weak tile can be constructed in $\R^5$ based on the example above. 

\medskip

Recall that $p>3$ and $q>6p^4$ are primes. The group $G\times H$ is isomorphic to $\Z_{6p}^4\times \Z_{6q}$, and can be identified with the set 
$\{0, 1, \dots,6p-1\}^4\times \{0, 1, \dots, 6q-1\}$ which, in turn, is a subset of $\Z^5$  (a discrete box). The set $P_t$ constructed by equations \eqref{pt} and \eqref{t1} can be identified with a subset $P$ of this box. For every $k\ge 1$ we can consider in $\Z^5$ the set $T(k)=\{0, 6p, \dots (k-1)6p\}^4\times \{0, 6q, \dots, (k-1)6q\}$, 
and the set $P(k)$ defined as $P(k)=P+T(k)$. By the results of \cite{tao} and \cite{tnos}, the set $P(k)$ is neither a tile, nor spectral in $\Z^5$, if $k$ is large enough. On the other hand, $P(k)$ pd-tiles $\Z^5$ in an obvious manner, because $P_t$ pd-tiles the group $\Z_{6p}^4\times \Z_{6q}$. Indeed, if $1_{P_t}\ast w=1_{\Z_{6p}^4\times \Z_{6q}}$ is a pd-tiling,
then we can identify $w$ with a nonnegative function defined on the box $\{0, 1,\dots 6p-1\}^4\times \{0, 1, \dots, 6q-1\}$, and letting $w(k)= w\ast 1_{(6pk\Z)^4 \times (6qk\Z)}$ we obtain that   $1_{P(k)}\ast w(k)=1_{\Z^5}$ is a pd-tiling of $\Z^5$.

\medskip

Finally, by the results of \cite{tao} and \cite{tnos}, the set $C(k)=P(k)+[0,1)^5$ is neither a tile, nor spectral in $\R^5$, while it pd-tiles $\R^5$ as $C(k)\ast w(k)=1_{\R^5}$. In particular, the mass of $w(k)$ at the origin is 1, and removing that mass we get a weak tiling of the complement of $C(k)$ by the set $C(k)$. 

\subsection*{Acknowledgments}
The research was partly carried out at the Erd\H os Center, R\'enyi Institute, in the framework of the semester "Fourier analysis and additive problems".

G.K. was supported by the Hungarian National Foundation for
Scientific Research, Grants No. K146922, FK 142993 and by the J\'anos Bolyai Research Fellowship of the Hungarian Academy of Sciences.

I.L. is supported by the Israel Science Foundation (Grant 607/21).

M.M. was supported by the Hungarian National Foundation for Scientific Research, Grants No. K132097, K146387. 

G.S. was supported by J\'anos Bolyai Research  Scholarship and by the Hungarian National Foundation for
Scientific Research, Grants: 138596 and SNN 132625.

\noindent
{\sc Gergely Kiss:}\\
HUN-REN Alfr\'ed R\'enyi Mathematical Institute\\
Re\'altanoda utca 13-15, H-1053, Budapest, Hungary\\
and\\
Corvinus University of Budapest, Department of Mathematics \\
Fővám tér 13-15, Budapest 1093, Hungary,\\
E-mail: {\tt kiss.gergely@renyi.hu}

\noindent{{\sc Itay Londner:}  Department of Mathematics, Faculty of Mathematics and Computer Science, Weizmann Institute of Science, Rehovot 7610001, Israel (ORCID ID: 0000-0003-3337-9427)}\\
E-mail: {\tt itay.londner@weizmann.ac.il}

\noindent
{\sc M\'at\'e  Matolcsi:}\\
HUN-REN Alfr\'ed R\'enyi Mathematical Institute\\
Re\'altanoda utca 13-15, H-1053, Budapest, Hungary\\
and\\
Department of Analysis and Operations Research,
Institute of Mathematics,
Budapest University of Technology and Economics,
M\H uegyetem rkp. 3., H-1111 Budapest, Hungary.
E-mail: {\tt matomate@renyi.hu}

\noindent{{\sc G\'abor Somlai:}  \\
ELTE-TTK, Institute of Mathematics \\ P\'azm\'any P\'eter s\'et\'any 1/C, Budapest, Hungary, H-1117},\\
E-mail: {\tt gabor.somlai@ttk.elte.hu}


\begin{thebibliography}
{999999999}
\bibitem{fmm}
 B. Farkas, M. Matolcsi, P. M\'ora: On Fuglede's conjecture and the existence of universal spectra. J. Fourier Anal. Appl. 12 (2006), no. 5, 483--494.

\bibitem{fr}
B. Farkas, S. G. R\'ev\'esz: Tiles with no spectra in dimension 4. Math. Scand. 98 (2006), no 1, 44--52.

\bibitem{nontilep}
S. J. Ferguson, N. Sothanaphan: Fuglede's conjecture fails in 4 dimensions over odd prime fields.
Discrete Math. 343 (1), p.111507, 2019.

\bibitem{fug}
B. Fuglede: Commuting self-adjoint partial differential operators and a group theoretic problem. J. Funct. Anal. 16 (1974), no. 1, 101--121.

\bibitem{gk}
R. Greenfeld, M. Kolountzakis: Tiling, spectrality and aperiodicity of connected sets. Israel Journal of Mathematics, to appear. 

\bibitem{ikp}
A. Iosevich, N. Katz, S. Pedersen: Fourier bases and a distance problem of Erd\H os. Math.
Res. Lett. 6 (1999), no. 2, 251–255.

\bibitem{ikt1}
A. Iosevich, N. Katz, T. Tao: Convex bodies with a point of curvature do not have Fourier
bases. Amer. J. Math. 123 (2001), no. 1, 115-120.

\bibitem{ikt2} 
A. Iosevich, N. Katz, T. Tao: The Fuglede spectral conjecture holds for convex planar
domains. Math. Res. Lett. 10 (2003), no. 5-6, 559-569.

\bibitem{kang}
M.-C. Kang: Minimal polynomials over cyclotomic fields. Amer. Math. Monthly, 104, (1997), no. 3, 258-260.

\bibitem{kms} 
G. Kiss, D. Matolcsi, M. Matolcsi, G. Somlai: Tiling and weak tiling in $\Z_p^d$. Sampl. Theory Signal Process. Data Anal. 22 : 1, (2023).

\bibitem{had}
 M. N. Kolountzakis, M. Matolcsi: Complex Hadamard matrices and the spectral set conjecture. Collect. Math., Vol. Extra, 281--291, 2006.


\bibitem{tnos}
M. N. Kolountzakis, M. Matolcsi: Tiles with no spectra. Forum Math. 18 (2006), no. 3, 519--528.


\bibitem{weak} M. N. Kolountzakis, N. Lev, M. Matolcsi: Spectral sets and weak tiling. Sampl. Theory Signal Process. Data Anal. (2023), 21 : 31.  

\bibitem{laba1}
I. \L aba: Fuglede's conjecture for a union of two intervals. Proc. Amer. Math. Soc. 129 (2001), no. 10, 2965--2972.

\bibitem{conv}
N. Lev, M. Matolcsi: The Fuglede conjecture for convex domains is true in all dimensions. Acta Mathematica  228 (2022), 385--420.

\bibitem{fug4}
M. Matolcsi: Fuglede's conjecture fails in dimension $4$. Proc. Amer. Math. Soc. 133 (2005), no. 10, 3021--3026. 


\bibitem{tao} 
 T. Tao: Fuglede's conjecture is false in 5 and higher dimensions. Math. Res. Lett. 11 (2004), no. 2, 251--258, 2004.

\end{thebibliography}
\end{document}